\newtheorem{thm}{Theorem}[section]
\newtheorem{cor}[thm]{Corollary}
\newtheorem{prop}[thm]{Proposition}
\newtheorem{lema}[thm]{Lemma}
\newtheorem{rem}[thm]{Remark}
\newcommand{\R}{\mathbb R}
\newcommand{\ve}{\varepsilon}
\newcommand{\lam}{\lambda}
\begin{document}

\title[Lyapunov Inequalities for PDE]
{Lyapunov-type Inequalities for Partial Differential Equations}

\author[P. L. de Napoli and J.P. Pinasco] {Pablo L. de Napoli and Juan
P. Pinasco}

\date{}
\thanks{ }
\subjclass{35P15, 35P30} \keywords{Sobolev spaces, Lyapunov Inequality, eigenvalues,
bounds, $p-$Laplace operator}

\begin{abstract}
In this work we present a Lyapunov inequality for linear and quasilinear elliptic
differential operators in $N-$dimensional domains $\Omega$. We also consider singular
and degenerate elliptic problems with $A_p$ coefficients involving the $p-$Laplace
operator with zero Dirichlet
 boundary condition.

  As an application of the inequalities obtained, we derive lower bounds for the first eigenvalue
 of the $p-$Laplacian, and compare them with the usual ones in the literature.
\end{abstract}

\maketitle

\section{Introduction}

In his classical work \cite{Ly}, Lyapunov proved that, given a continuous periodic and
positive function $w$ with period $L$, the solution $u$ of the ordinary differential
equation $u''+w(t)u = 0,$ in $(-\infty, +\infty)$, was stable if
$$ L \int_0^L w(t)dt< 4.$$

Then, Borg in \cite{Borg} introduced the Lyapunov inequality in his proof of the
stability criteria for sign changing weights $w$. He showed that the inequality
\begin{equation}\label{lyapclas}\frac{4}{L} \le \int_0^L |w(t)|dt
\end{equation}
  must be satisfied in order
to have a nontrivial solution in $[0,L]\subset \R$ of the problem
\begin{equation}\label{ecuac}\left\{ \begin{array}{l}
 u'' + w(t)u =0,\\
u(0)=0=u(L).\end{array}\right.
\end{equation}

Since then, it was rediscovered and generalized many times. Inequality
\eqref{lyapclas} was applied in stability problems, oscillation theory, a priori
estimates, other inequalities, and eigenvalue bounds for ordinary differential
equations. Different proofs of this inequality have been appeared in the literature:
the proof of Patula \cite{Pat} by direct integration, or the one of Nehari \cite{Neha}
showing the relationship with Green's functions, among several others. See the survey
\cite{BrHi} for other proofs.

In the nonlinear setting, the following inequality
\begin{equation}\label{lyap1d}
\frac{2^p}{L^{p-1}} \le \int_0^L w(t)dt
\end{equation}
generalized Lyapunov inequality \eqref{lyapclas} to $p-$Laplacian problems,
$$\left\{ \begin{array}{l}
 (|u'|^{p-2}u')'  + w(t)|u|^{p-2}u =0,\\
 u(0)=0=u(L).\\
\end{array}\right.
 $$
 Here,
$w\in L^1$ and $1<p<\infty$, for $p=2$ we recover the linear problem \eqref{ecuac}.
Several proofs were given in the last years, see \cite{LYHA, Pach, P, Y}; although it
seems to be derived first by Elbert \cite{Elb}.

Later, we extended it in \cite{DNP} to  nonlinear operators in Orlicz spaces
generalizing the $p$-Laplacian,
\begin{equation}\label{filap}-(\varphi(u'))' = \lam r(t)\varphi(u),
\end{equation}
where $\varphi(s)$ is a convex nondecreasing function, such that $s\varphi(s)$ satisfy
the $\Delta_2$ condition. Moreover, we also extend it to systems of resonant type (see
\cite{BoDF}) involving $p-$ and $q-$Laplacians in \cite{DNP2}.

\bigskip

Beside the one dimensional case, there are few works devoted to similar inequalities
for partial differential equations. An exception is the work of Ca\~nada, Montero and
Villegas \cite{C1, C2}, where the following problem was considered,
 \begin{equation}\label{1.1}
\left\{ \begin{array}{ll}
 \Delta u + w(x)u =0, &  x \in \Omega\\
\frac{\partial u}{\partial \eta}=0 , &  x \in \partial \Omega\end{array}\right.
\end{equation}
and a nonexistence result was obtained for general domains. The authors gives some
bounds involving the second Neumann eigenvalue $\mu_2$. However, it is well known that
$\mu_2$ fails to reflect geometric properties of $\Omega$, and can be made arbitrarily
close to zero by adding a slight perturbation of the domain as in \cite{CH}. Also,
several papers of Egorov and Kondriatev, included in their book \cite{Egor}, contain
Lyapunov type inequalities for higher order linear differential operators.

\bigskip

The aim of this work is to prove a Lyapunov inequality for $N$-dimensional (linear and
quasilinear) elliptic operators with zero Dirichlet boundary conditions, reflecting
more geometric information than the measure of the domain. Our toy model is the
$p$-Laplace operator, and we consider here the following problem,
 \begin{equation}\label{mainsyst}
\left\{ \begin{array}{ll}
 \Delta_p u + w(x)|u|^{p-2}u =0, &  x \in \Omega\\
u=0 , &  x \in \partial \Omega.\end{array}\right.
\end{equation}
As usual, we denote $\Delta_p u = div(| \nabla u|^{p-2}\nabla u)$ for any
$1<p<+\infty$, and the weight $w \in L^s$ for some $s$ depending on $p$ and $N$. We
include a short appendix with some facts about the eigenvalues of the $p$-Laplace
operator that we will need later.

Let us fix  the following notations that will be used below: let us call $r_{\Omega}$
the inner radius of $\Omega$,
$$ r_{\Omega}= \max_{x \in \Omega} d_\Omega(x) $$
where
$$ d_\Omega(x)= d(x,\Omega^c)= \inf_{y \in \partial \Omega} |x-y| $$
is the distance from $x\in \Omega$ to the boundary.

Now, let us note that the length $L$ of the interval in inequality \eqref{lyap1d} can
be thought as the \textit{measure} of the interval, but it can be understood also as
twice  the \textit{inner radius} of the interval, by rewriting the inequality as
$$2\left(\frac{2}{L}\right)^{p-1} \le \int_0^L q(t)dt.
$$
This is our main objective here: to derive some Lyapunov type inequalities involving
the inner radius of the domain and norms of the weight $w$.

\bigskip

We divide the paper in two main parts, in the first we cover the case $p>N$, and we
prove the existence of a Lyapunov inequality involving the $L^1$ norm of the weight
and the inner radius of the domain. We also consider singular problems, and we need to
prove a Morrey's theorem for $A_p$ weights.

In the second one we analyze the case $p< N$, we show that there are Lyapunov type
inequalities involving the $L^s$ norm for $ s > N/p$.

We do not consider here the case $p=N$. For $p=N=2$, we mention two interesting
results from Osserman \cite{Oss}:

\begin{thm}[Osserman, \cite{Oss}] Given a domain $\Omega \in \R^2$ of
  connectivity $k \ge 2$, the first Dirichlet eigenvalue of problem
$$ \left\{
\begin{array}{rclll}
-\Delta u  &=&    \lambda u & \hbox{in} &\Omega \\
u & = & 0 & \hbox{on} & \partial \Omega \\
\end{array}
\right. $$
satisfy
$$ \lambda_1 \ge \frac{1}{k^2 r_{\Omega}^2}.$$
\end{thm}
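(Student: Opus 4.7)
The plan is to reduce the eigenvalue estimate to a Poincar\'e-type inequality via the variational characterization
$$ \lambda_1 = \inf_{u \in H^1_0(\Omega) \setminus \{0\}} \frac{\int_\Omega |\nabla u|^2 \, dx}{\int_\Omega u^2 \, dx}, $$
so it suffices to show that $\int_\Omega u^2 \le k^2 r_\Omega^2 \int_\Omega |\nabla u|^2$ for every $u \in H^1_0(\Omega)$. Note that for $k=2$ this would recover (up to the constant) the classical Hayman-Osserman bound $\lambda_1(\Omega) \ge c/r_\Omega^2$ for simply connected planar domains, so one should expect the proof to build on the simply connected case and pay an extra price $k$ for each additional boundary component.

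First I would establish or quote the simply connected model inequality: if $D \subset \R^2$ is simply connected with inner radius $r_D$, then $\lambda_1(D) \ge 1/(4 r_D^2)$. The standard route is via the Koebe $1/4$-theorem: uniformize $D$ by a conformal map $\varphi$ from the disk $\mathbb{D}$, use the conformal invariance of the Dirichlet integral $\int_D |\nabla u|^2 = \int_{\mathbb{D}} |\nabla (u\circ \varphi)|^2$, and bound $\int_D u^2$ by $\sup |\varphi'|^2 \int_{\mathbb{D}} (u\circ \varphi)^2$ with a Koebe-type distortion estimate in terms of $r_D$. Then apply the Faber--Krahn or Poincar\'e inequality on the disk.

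For a $k$-connected domain $\Omega$ with $k \ge 2$, I would pass to the hyperbolic/universal-cover viewpoint. The universal cover of $\Omega$ is the disk, and the hyperbolic density $\rho_\Omega$ satisfies a comparison with the Euclidean distance-to-boundary of the form $d_\Omega(x) \rho_\Omega(x) \le C\,k$ (a quantitative Schottky-type bound reflecting that extra boundary components force the hyperbolic metric to be only as large as $k$ times the reciprocal distance). Combined with the universal lower bound $\lambda_1(\Omega) \ge \tfrac14 \inf_{x \in \Omega} \rho_\Omega(x)^{-2}$ coming from the inequality $\int u^2 \le 4 \int \rho_\Omega^{-2} |\nabla u|^2$ (a hyperbolic Poincar\'e inequality), this yields $\lambda_1(\Omega) \ge 1/(k^2 r_\Omega^2)$ after optimizing the geometric constants.

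The main obstacle will be the $k$-dependence. The conformal/hyperbolic inputs are clean in the simply and doubly connected cases but become delicate for higher connectivity: the bound $d_\Omega \rho_\Omega \le C k$ needs a Schottky-type argument or a decomposition of $\Omega$ into $k-1$ annular pieces around each boundary component, each handled by the doubly connected estimate, and then glued with a telescoping or covering argument that accumulates the factor $k$ in the distortion. Alternatively, one can avoid hyperbolic geometry by integrating along paths to the boundary: for each $x \in \Omega$ choose a path $\gamma_x$ to $\partial\Omega$ whose length is controlled by $k\,r_\Omega$ (going around up to $k-1$ holes), write $u(x) = \int_{\gamma_x} \nabla u \cdot d\ell$, square, and integrate using Fubini and a bound on the number of paths through a given point; tracking that multiplicity and the path length is precisely where the $k^2$ enters.
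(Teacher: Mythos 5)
This statement is quoted in the paper from Osserman's article \cite{Oss}; the paper gives no proof of it, so there is no internal argument to compare yours against. Osserman's own argument is elementary and conformal-geometry-free: it is based on the distance function $d_\Omega$ and its level curves (the method of interior parallels), with the connectivity $k$ entering through an isoperimetric-type estimate for the inner parallel curves, and it produces the clean constant $1/(k^2 r_\Omega^2)$ directly. Your proposal takes a genuinely different, conformal/hyperbolic route, but as written it has several concrete gaps, listed below.

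First, in the simply connected base case you propose to bound $\int_D u^2$ by $\sup|\varphi'|^2\int_{\mathbb{D}}(u\circ\varphi)^2$; this is not available, because the Koebe distortion theorem only gives $|\varphi'(z)|\le 4r_D/(1-|z|^2)$, which is unbounded as $|z|\to 1$. One must keep the weight $|\varphi'|^2$ inside the integral and use a Hardy-type inequality on the disk with weight $(1-|z|^2)^{-2}$ --- which is exactly the ``hyperbolic Poincar\'e inequality'' you invoke later, so the two halves of your plan are really one and the same step. Second, that inequality, $\int_\Omega u^2\rho_\Omega^2\le 4\int_\Omega|\nabla u|^2$, asserts that the bottom of the spectrum of the hyperbolic Laplacian on $\Omega$ is at least $1/4$; this is true for the disk but is \emph{not} universal for multiply connected plane domains (for $\Omega=\mathbb{D}/\Gamma$ the bottom of the spectrum is $\delta(1-\delta)<1/4$ whenever the critical exponent $\delta$ of $\Gamma$ exceeds $1/2$, which occurs for Schottky groups uniformizing domains of higher connectivity). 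Third, the geometric input you state, $d_\Omega(x)\rho_\Omega(x)\le Ck$, points the wrong way: to conclude you need a \emph{lower} bound $\rho_\Omega(x)\ge c/(k\,r_\Omega)$, and the natural pointwise version $\rho_\Omega(x)d_\Omega(x)\ge c/k$ is already false for $k=2$, since for a round annulus $\{r<|z|<R\}$ the infimum of $\rho_\Omega d_\Omega$ is of order $1/\log(R/r)$ and can be made arbitrarily small. Finally, the alternative path-integration argument defers precisely the hard point --- that each $x$ reaches $\partial\Omega$ by a path of length $\lesssim k\,r_\Omega$ and that the resulting family of paths has controlled multiplicity --- to an unspecified ``tracking''; that covering estimate is where the whole theorem lives. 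As it stands the proposal is a plausible research plan rather than a proof.
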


\begin{thm}[Osserman, \cite{Oss}]
Let $\Omega \in \R^2$, and $\Omega_{\ve}$ the domain obtained by removing from
$\Omega$ a finite number of disjoint disks of radius $\ve$  centered at a fixed set
$E$ of points in $\Omega$. Then,
$$ \lim_{\ve \to 0} \lambda_1(\Omega_{\ve}) = \lambda_1(\Omega).
$$
\end{thm}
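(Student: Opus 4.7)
My plan is to prove the equality by establishing the two one-sided bounds. One direction is immediate from domain monotonicity: since $\Omega_{\ve}\subset\Omega$, any $\psi\in H_0^1(\Omega_{\ve})$ extends by zero to a function in $H_0^1(\Omega)$ with the same Dirichlet integral and $L^2$ norm, so the Rayleigh characterization of $\lambda_1$ yields $\lambda_1(\Omega_{\ve})\ge \lambda_1(\Omega)$ for every $\ve>0$. The content of the theorem is therefore the reverse bound $\limsup_{\ve\to 0}\lambda_1(\Omega_{\ve})\le\lambda_1(\Omega)$, which I would obtain by inserting a suitably cut-off first eigenfunction into the Rayleigh quotient on $\Omega_{\ve}$.

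Let $u$ be a first Dirichlet eigenfunction on $\Omega$, normalized so that $\|u\|_{L^2(\Omega)}=1$ and bounded thanks to elliptic regularity. Around each $x_i\in E$ I would introduce the two-dimensional logarithmic cutoff
\[
\varphi_{\ve}(x)=\begin{cases} 0, & |x-x_i|\le\ve,\\ \dfrac{\log(|x-x_i|/\ve)}{\log(1/\sqrt{\ve})}, & \ve\le|x-x_i|\le\sqrt{\ve},\\ 1, & |x-x_i|\ge\sqrt{\ve}, \end{cases}
\]
patched independently in disjoint neighbourhoods of the finitely many points of $E$, which is possible once $\ve$ is small enough. A direct computation in polar coordinates yields $\int_{\Omega}|\nabla\varphi_{\ve}|^{2}\le C/\log(1/\ve)\to 0$, the standard capacity estimate for a thin annulus in $\R^2$. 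Setting $v_{\ve}=\varphi_{\ve}u\in H_0^1(\Omega_{\ve})$ and expanding
\[
|\nabla v_{\ve}|^{2}=\varphi_{\ve}^{2}|\nabla u|^{2}+2\varphi_{\ve}u\,\nabla u\cdot\nabla\varphi_{\ve}+u^{2}|\nabla\varphi_{\ve}|^{2},
\]
the last summand is bounded by $\|u\|_{\infty}^{2}\int|\nabla\varphi_{\ve}|^{2}\to 0$, the cross term goes to zero by Cauchy--Schwarz using the other two, and the first term converges to $\int|\nabla u|^{2}$ by dominated convergence. The same reasoning gives $\int\varphi_{\ve}^{2}u^{2}\to 1$, so the Rayleigh quotient of $v_{\ve}$ converges to $\lambda_1(\Omega)$, and the variational inequality $\lambda_1(\Omega_{\ve})\le\int|\nabla v_{\ve}|^{2}/\int v_{\ve}^{2}$ closes the argument.

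The main obstacle is the choice of cutoff. The logarithmic profile is dictated by two-dimensional potential theory: up to normalization it is the harmonic function on the annulus $\ve\le r\le\sqrt{\ve}$ with the prescribed boundary values, so its Dirichlet energy equals the $2$-capacity of the inner circle relative to the outer one, a quantity of order $1/\log(1/\ve)$ that vanishes in the limit. In dimensions $N\ge 3$ points carry positive $N$-capacity and no such cutoff exists, which is precisely why Osserman's theorem is confined to $\R^{2}$.
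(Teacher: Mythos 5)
The paper does not actually prove this statement: it is quoted from Osserman's article \cite{Oss} as a known result, so there is no internal proof to compare against. Your capacity argument is the standard proof and is correct in substance. Monotonicity of the Rayleigh quotient under extension by zero gives $\lambda_1(\Omega_\ve)\ge\lambda_1(\Omega)$, the logarithmic cutoff has Dirichlet energy $4\pi/\log(1/\ve)\to 0$, and the expansion of $|\nabla(\varphi_\ve u)|^2$ together with dominated convergence drives the Rayleigh quotient of $v_\ve$ to $\lambda_1(\Omega)$; the boundedness of $u$ that you invoke does hold (Moser iteration for Dirichlet eigenfunctions on a bounded open set), so that step is sound. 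One technical point deserves a sentence: $\varphi_\ve u$ vanishes on the closed disks but not on a neighbourhood of them, so its membership in $H^1_0(\Omega_\ve)$ is not immediate from the definition. The cheapest fix is to make the cutoff vanish on $|x-x_i|\le 2\ve$ (the energy estimate is unchanged), or to approximate $u$ by $u_k\in C_c^\infty(\Omega)$, observe that $\varphi_\ve u_k$ is Lipschitz and bounded by a multiple of the distance to the removed disks, and truncate at level $\delta$ to get test functions supported away from them.

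Your closing remark, on the other hand, is incorrect and should be dropped: the $p$-capacity of a point in $\R^N$ is zero for every $p\le N$ and positive only for $p>N$, so for the Laplacian ($p=2$) the same construction works in every dimension $N\ge 2$ (with the profile $r^{2-N}$ for $N\ge 3$, whose energy is of order $\ve^{N-2}$), and the limit statement holds in all dimensions. The restriction to $\R^2$ in the paper's discussion has a different purpose: $p=N$ is the borderline case in which a lower bound for $\lambda_1$ in terms of the inner radius fails, and the two quoted theorems of Osserman are there precisely to illustrate that failure.
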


Clearly, both results are enough to conclude that we cannot expect a general
inequality involving the inner radius of the domain when $p=N$, although it would be
very interesting to find a related inequality.

Finally, we show the optimality of the bounds, and we apply them to eigenvalue
problems. We compare them with Sturmian and isoperimetric bounds.

\section{Statement of the results and organization of the paper}

Let us state precisely our results in this Section.

In Section \S 3, we consider the case $p>N$ and we prove first:

\begin{thm}\label{Lyapunov-using-Morrey}
Let  $\Omega \subset \R^N$ be an open set, let $w \in L^1(\Omega)$ be a non-negative
weight, and let $u \in W^{1,p}_0(\Omega)$ with  $p>N$ be a nontrivial solution of
$$ \left\{
\begin{array}{rclll}
-\Delta_p u &=&   w(x) |u|^{p-2} u & \hbox{in} & \Omega \\
u & = & 0 & \hbox{on} & \partial \Omega \\
\end{array}
\right. $$
Then,
\begin{equation}\label{lyapmayor}  \frac{C}{r_{\Omega}^{p-N}} \leq  \| w\|_{L^1(\Omega)}
\end{equation}
where $C$ is an universal constant depending only on $p$ and $N$.
\end{thm}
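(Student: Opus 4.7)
The hypothesis $p>N$ places us squarely in the Morrey regime, so the strategy suggested by the name of the theorem is clear: combine three ingredients, namely (a) Morrey's inequality in its quantitative form, (b) the Dirichlet boundary condition (this is where the inner radius $r_\Omega$ will enter, through the distance of a maximizer to $\partial\Omega$), and (c) testing the weak formulation of the equation against $u$ itself.

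First, I would extend $u$ by zero to $\R^N$, which is legitimate since $u\in W^{1,p}_0(\Omega)$, and apply Morrey's inequality
$$ |u(x)-u(y)|\leq C_{N,p}\,|x-y|^{1-N/p}\,\|\nabla u\|_{L^p(\Omega)} \qquad \forall\,x,y\in\overline{\Omega}. $$
Choosing $x_0\in\overline{\Omega}$ where $|u|$ attains its maximum and $y_0\in\partial\Omega$ with $|x_0-y_0|=d_\Omega(x_0)\leq r_\Omega$, the boundary condition $u(y_0)=0$ yields the $L^\infty$ estimate
$$ \|u\|_{L^\infty(\Omega)} \leq C_{N,p}\,r_\Omega^{\,1-N/p}\,\|\nabla u\|_{L^p(\Omega)}. $$

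Second, I would test the PDE with $u$ (admissible since $u\in W^{1,p}_0(\Omega)$), which gives the energy identity
$$ \int_\Omega |\nabla u|^p\,dx \;=\; \int_\Omega w(x)\,|u|^p\,dx \;\leq\; \|u\|_{L^\infty(\Omega)}^p\,\|w\|_{L^1(\Omega)}. $$
Substituting the previous bound and cancelling $\|\nabla u\|_{L^p(\Omega)}^p$, which is positive by nontriviality of $u$, I expect to obtain
$$ 1 \;\leq\; C_{N,p}^{\,p}\,r_\Omega^{\,p-N}\,\|w\|_{L^1(\Omega)}, $$
which is precisely \eqref{lyapmayor} with $C=C_{N,p}^{-p}$.

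The only delicate point I foresee is invoking Morrey's inequality on an \emph{arbitrary} open set $\Omega$ with no regularity of $\partial\Omega$. This is exactly why membership in $W^{1,p}_0(\Omega)$, rather than $W^{1,p}(\Omega)$, is essential: the zero extension lands in $W^{1,p}(\R^N)$, where the standard Morrey inequality applies without extension theorems. Beyond that, the argument is a routine energy estimate, and the resulting constant $C$ is explicit in terms of the sharp Morrey constant.
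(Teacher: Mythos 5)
Your proof is correct and follows essentially the same route as the paper's: Morrey's inequality applied between a maximizer of $|u|$ and a nearest boundary point (contributing the factor $r_\Omega^{1-N/p}$), combined with the energy identity $\int_\Omega|\nabla u|^p=\int_\Omega w|u|^p\le \|u\|_\infty^p\|w\|_1$ and cancellation of $\|u\|_\infty$. The only cosmetic difference is the order in which the two estimates are chained, which does not affect the argument or the constant.
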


Let us note that the constant $C$ is the same for any $\Omega \subset \R^N$, since it
is related to the constant given by Morrey's Theorem; we believe that it can be
improved for particular domains. However, the power of the inner radius is optimal.

\bigskip
Then,  we consider the following problem
$$
-div (v (x)|\nabla u|^{p-2}\nabla u) = w(x)|u|^{p-2}u
$$
where now $v$ is a singular or degenerate weight, typically a power of the distance to
the boundary or powers of $|x|$ (as in Henon equations, and Caffarelli-Kohn-Nirenberg
inequalities).

Here, the problem is more subtle since we need the density of continuous functions in
the weighted Sobolev space
$$
W_0^{1,p}(\R^N,v, w) := \{u\in L^1_{loc}(\R^N) : w^{1/p}u\in L^p(\R^N) \mbox{ and }
 v^{1/p}\nabla u \in [L^p(\R^N)]^N\}
$$
where $\nabla u$ is a distributional gradient in the sense of Schwartz.

Following \cite{Kil}, this is true when $v=w$ belong to the Muckenhoupt class $A_p$,
that is, $v$ is a nonnegative function in $L^1_{loc}(\R^N)$, and there exists a
constant $c_{p,v}$ such that
\begin{equation}
 \left( \int_B v(x)dx\right)\left( \int_B v(x)^{-\frac{1}{p-1}}dx\right)^{p-1} \le c_{p,v}|B|^p
\end{equation}
for every ball $B\in \R^N$.

The same argument applies for different weights $v$, $w$ in $A_p$, as we will show in
Lemma \ref{easylem} below. So, we will restrict ourselves to weights $v, w\in A_t$
with $t<p/N$, and in this case we prove the following Lyapunov type inequality:

\begin{thm}\label{singular}
Let $\Omega \subset \R^N$, and let $v \in A_t(\R^N)$, with $t<p/N$, and $v\ge 0$. Let
us define
$$
g(r_{\Omega})=\sup_{x\in \Omega} \int_{B(x,r_{\Omega})}v^{-\frac{1}{t-1}}(x) dx.
 $$

Let $u \in W^{1,p}_0(\Omega)$ be a nontrivial solution of
$$ \left\{
\begin{array}{rclll}
-div (v(x)|\nabla u|^{p-2}\nabla u) & = & w(x)|u|^{p-2}u & \hbox{in} & \Omega \\
u & = & 0 & \hbox{on} & \partial \Omega. \\
\end{array}
\right. $$
 Then, we have the following Lyapunov-type inequality
\begin{equation}\label{lyapsing}
1 \le C(p, t, N) \;    r_{\Omega}^{p-tN}  g(r_{\Omega})^{t-1}
\int_{\Omega} w(z) dz
\end{equation} where the constant
$C(p,t, N)$ depends only on $p$, $t$, and $N$.
\end{thm}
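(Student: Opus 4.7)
The plan is to mirror the strategy of Theorem \ref{Lyapunov-using-Morrey}: combine a Morrey-type bound on $\|u\|_\infty$ with the energy identity obtained by testing the equation against $u$. The main obstacle is that the weighted gradient $v^{1/p}\nabla u$ lives in $L^p$, to which classical Morrey does not apply directly. The hypothesis $t<p/N$ is tailored precisely to circumvent this: the auxiliary exponent $q:=p/t$ satisfies $q>N$, so a standard Morrey estimate is available for $\nabla u$ in $L^q$, and H\"older's inequality with exponents $t$ and $t/(t-1)$ allows one to exchange the weighted $L^p$ norm for the unweighted $L^q$ norm at the cost of a factor involving $v^{-1/(t-1)}$.

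First I would set $M=\|u\|_{L^\infty(\Omega)}$ and let $x_0\in\Omega$ be a point where $|u|$ attains $M$. Continuity of $u$ should come out of the argument below, and is ensured by the fact that $u\in W^{1,p/t}_{\mathrm{loc}}$, which holds because $v^{-1/(t-1)}\in L^1_{\mathrm{loc}}$ (part of the $A_t$ condition). Since $d_\Omega(x_0)\le r_\Omega$, I can pick $y_0\in\partial\Omega$ with $|x_0-y_0|\le r_\Omega$, and after extending $u$ by zero to $\R^N$ we have $u(y_0)=0$.

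Next, I would apply H\"older's inequality on a ball $B$ of radius $O(r_\Omega)$ around $x_0$,
$$
\int_B |\nabla u|^{p/t}\,dy
   \le \Bigl(\int_B v|\nabla u|^p\,dy\Bigr)^{1/t}
       \Bigl(\int_B v^{-1/(t-1)}\,dy\Bigr)^{(t-1)/t},
$$
and combine it with the classical Morrey-type estimate on $B$, which gives
$$
M=|u(x_0)-u(y_0)|\le C\,r_\Omega^{\,1-Nt/p}\,\|\nabla u\|_{L^{p/t}(B)}.
$$
Using the doubling property of $v^{-1/(t-1)}$ (itself an $A_{t/(t-1)}$ weight) to replace the integral over $B$ by $g(r_\Omega)$ up to an absolute constant, the two bounds combine to
$$
M\le C\,r_\Omega^{1-Nt/p}\,\Bigl(\int_\Omega v|\nabla u|^p\,dx\Bigr)^{1/p}\,g(r_\Omega)^{(t-1)/p}.
$$

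The final ingredient is the energy identity obtained by plugging $u$ into the weak formulation of the equation:
$$
\int_\Omega v|\nabla u|^p\,dx=\int_\Omega w|u|^p\,dx\le M^p\,\|w\|_{L^1(\Omega)}.
$$
Inserting this in the previous inequality, dividing by $M>0$ (since $u$ is nontrivial), and raising to the $p$-th power yields \eqref{lyapsing}. The points I expect to require the most care are the existence of a continuous representative of $u$ attaining its supremum at an interior point, and the precise choice of Morrey ball: large enough to contain $y_0$ together with a set of positive measure in $\Omega^c$ (so that $u$ vanishes on a non-negligible subset, enabling the Morrey/Poincar\'e bound to be expressed purely in terms of $\nabla u$), yet of radius comparable to $r_\Omega$ so that the integral of $v^{-1/(t-1)}$ is controlled by $g(r_\Omega)$ with an absolute doubling constant.
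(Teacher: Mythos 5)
Your proposal is correct and follows essentially the same route as the paper: bound the oscillation of $u$ between its maximum point and the nearest boundary point, insert the weight $v$ via H\"older's inequality using $v^{1/p}v^{-1/p}$ and the $A_t$-dual power $v^{-1/(t-1)}$, and close with the energy identity $\int_\Omega v|\nabla u|^p = \int_\Omega w|u|^p \le M^p\|w\|_{L^1}$. The only difference is organizational: the paper applies a single three-exponent H\"older inequality ($\frac1p+\frac1q+\frac1s=1$, $s=\frac{p}{t-1}$) to the Riesz-potential representation of $|u(x)-u(y)|$, whereas you factor the same computation as a two-exponent H\"older step (passing to the unweighted $L^{p/t}$ norm of $\nabla u$, with $p/t>N$) followed by the classical Morrey embedding — these compose to the identical exponents $r_\Omega^{\,p-tN}\,g(r_\Omega)^{t-1}$.
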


\bigskip

 Theorem \ref{singular} is based on the fact that $A_t \subset A_p$ whenever $t<p$. Briefly, we will bound
 $u$ by the fractional integral (or Riesz potential) of its gradient, and after adding the corresponding power of the coefficient,
  we wish
to use Holder's inequality with exponents $p$ in the gradient, and an exponent close
to $p'$ in $|\cdot |^{1-N}$.

\begin{rem} This theorem can be thought as a Morrey's embedding with $A_p$ weights. To our
knowledge, no such result was proved before for the case $p>N$. For $p<N$, we refer
the interested reader to the book of Turesson \cite{Ture}.
\end{rem}

Although the terms in the Lyapunov inequality \ref{lyapsing} seems difficult to
compute, in certain interesting case are rather simple to compute. We choose as an
example  a coefficient which is a power of the distance to the boundary, $v(x) =
d_{\Omega}^{\gamma}(x)$, and in this case we obtain a very clean bound,
$$
1 \le C  \;    r_{\Omega}^{p-N-\gamma} \int_{\Omega} w(z) dz,
$$
where $C$ depends only on $N$, $p$, and $\gamma$. Of course, $\gamma$ is restricted by
the $A_t$ condition, let us recall that $d^{\gamma}_{\Omega}(x)\in A_t$ for
$-1<\gamma< t-1$.

\bigskip
For $1<p < N$, a similar inequality cannot hold for arbitrary domains, as we mention
in the Introduction. Perhaps the easiest way to understand why is to remove a discrete
set of points with zero capacity from a ball, and the first eigenvalue
 remains the same.

So, in Section \S 4, we prove the following weaker inequality:

\begin{thm}\label{Lyapunov-pn}
Let $\Omega \subset \mathbb{R}^N$ be a smooth domain, $\frac{N}{p}<s$, and $w \in
L^s(\Omega)$.
Let $u \in W^{1,p}_0(\Omega)$ be a nontrivial solution of
$$ \left\{
\begin{array}{rclll}
-\Delta_p u &=&   w(x) |u|^{p-2} u & \hbox{in} & \Omega \\
u & = & 0 & \hbox{on} & \partial \Omega \\
\end{array}
\right. $$
Then, we have the following Lyapunov inequality
\begin{equation}\label{lyapmenor}  \frac{C}{r^{\frac{sp-N}{s}}} \leq  \| w
\|_{L^s(\Omega)}.
\end{equation}
The constant $C$ depends on $p$, $N$, and the capacity of $\mathbb{R}^N \setminus
\Omega$.
\end{thm}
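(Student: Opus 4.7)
The starting point is the weak formulation: since $u \in W^{1,p}_0(\Omega)$ is itself an admissible test function, testing the equation against $u$ and integrating by parts yields the energy identity
$$\int_\Omega |\nabla u|^p \, dx = \int_\Omega w(x) |u|^p \, dx.$$
The hypothesis $s > N/p$ is precisely what makes $ps' = ps/(s-1)$ strictly below the Sobolev exponent $p^* = Np/(N-p)$, so applying H\"older's inequality with conjugate exponents $s, s'$ gives
$$\int_\Omega w\,|u|^p \, dx \leq \|w\|_{L^s(\Omega)} \, \|u\|_{L^{ps'}(\Omega)}^{\,p},$$
and the exponent $ps'$ is subcritical.

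Next I would control $\|u\|_{L^{ps'}}$ by interpolating between $L^p$ and $L^{p^*}$. Solving $1/(ps') = (1-\theta)/p + \theta/p^*$ gives $\theta = N/(ps) \in (0,1)$, and a short computation shows $p(1-\theta) = (sp-N)/s$. The Sobolev embedding (after extending $u$ by zero to $\R^N$) yields $\|u\|_{L^{p^*}} \leq S_{N,p} \|\nabla u\|_{L^p}$ with a universal constant, while for smooth $\Omega$ one has a Poincar\'e-type estimate $\|u\|_{L^p(\Omega)} \leq C\,r_\Omega \, \|\nabla u\|_{L^p(\Omega)}$, where $C$ depends on the capacity of $\R^N \setminus \Omega$. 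Multiplying these,
$$\|u\|_{L^{ps'}} \leq \|u\|_{L^p}^{1-\theta} \|u\|_{L^{p^*}}^{\theta} \leq C \, r_\Omega^{\,1-\theta}\, \|\nabla u\|_{L^p(\Omega)}.$$

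Plugging this back into the energy identity together with the H\"older bound gives
$$\|\nabla u\|_{L^p}^{\,p} \leq C\, \|w\|_{L^s(\Omega)}\, r_\Omega^{\,p(1-\theta)}\, \|\nabla u\|_{L^p}^{\,p}.$$
Since $u$ is nontrivial, $\|\nabla u\|_{L^p}>0$, so dividing and rearranging yields
$$\frac{1}{r_\Omega^{\,(sp-N)/s}} \leq C\, \|w\|_{L^s(\Omega)},$$
which is the claim.

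The main obstacle is the Poincar\'e step: for a general open set of small inner radius one cannot bound $\|u\|_{L^p}$ by $r_\Omega \|\nabla u\|_{L^p}$ with a dimensional constant; long thin tubes or domains with nearly polar boundary pieces would fail. This is exactly why the theorem assumes $\Omega$ smooth and records a dependence on the capacity of the complement: one needs a uniform capacity density condition on $\R^N\setminus\Omega$ to invoke a Maz\textquotesingle ya-type capacitary Poincar\'e inequality with the correct $r_\Omega$ scaling. Everything else in the proof is essentially the standard H\"older/Sobolev/interpolation sandwich already used in the one-dimensional case, forced by the scaling $(sp-N)/s$ which is dictated by dimensional analysis of the equation.
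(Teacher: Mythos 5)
Your proof is correct and follows essentially the same route as the paper: the energy identity, H\"older with exponents $s,s'$, interpolation of the $L^{ps'}$ norm between $L^p$ and $L^{p^*}$, Sobolev embedding for the critical piece, and extraction of the $r_\Omega$ factor from the subcritical piece, with the same exponent bookkeeping $\theta=N/(ps)$, $p(1-\theta)=(sp-N)/s$. The only cosmetic difference is that the paper realizes your Poincar\'e step via Hardy's inequality together with $d(x)\le r_\Omega$, i.e.
\[
\frac{1}{r_\Omega^{p}}\int_\Omega |u|^p\,dx \le \int_\Omega \frac{|u|^p}{d(x)^p}\,dx \le C_h \int_\Omega |\nabla u|^p\,dx,
\]
which is available under exactly the uniform-fatness/capacity-density condition on $\mathbb{R}^N\setminus\Omega$ that you correctly identify as the source of the domain-dependent constant.
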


 \bigskip
The proof of this theorem is based on the Sobolev immersion with critical exponent and
Hardy's inequality, and for this reason the $p-$capacity of $\mathbb{R}^N \setminus
\Omega$ appears on the constant. Although the constant is domain-dependent, for
certain classes of sets we can give an uniform constant, i.e., for Lipschitz or convex
domains, we have an explicit constant depending only on $p$ and $N$ (see the details
below at the end of Section \S4).

 \begin{rem} We do not consider singular problems when $p<N$. Similar results as in Section \S 3 can be
 obtained by combining the results in \cite{Ture} with Hardy-type inequalities
 involving
 $A_p$ weights, see
 the book of Opic and Kufner  \cite{OpK}, following the proof of Theorem
 \ref{Lyapunov-pn}.
 \end{rem}

\bigskip

Let us note that we have the following lower bounds for the first eigenvalue of the
$p-$Laplacian with zero Dirichlet boundary conditions:
\begin{cor}\label{core}
Let $\lambda_{1}$be the first eigenvalue of
\[
-\Delta_{p}u=\lambda w(x)|u|^{p-2}u,\] in $\Omega$ with zero Dirichlet boundary conditions in $\partial \Omega$.
 Then,
 \begin{itemize}
 \item for $p>N$ and $w$ as in Theorem \ref{Lyapunov-using-Morrey}, we have
$$
\frac{C}{r_{\Omega}^{p-N} \|w\|_1 }\leq
\lambda_{1},
$$
\item  for $p<N$ and $w$, $s$ as in Theorem \ref{Lyapunov-pn},
$$
\frac{C}{r_{\Omega}^{\frac{sp-N}{s}} \|w\|_s }\leq
\lambda_{1}.
$$
\end{itemize}\end{cor}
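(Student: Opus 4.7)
The plan is to apply the two Lyapunov inequalities directly with the weight rescaled by $\lambda_1$. Let $u\in W^{1,p}_0(\Omega)$ be a nontrivial eigenfunction associated to $\lambda_1$, so that
$$ -\Delta_p u = \lambda_1 w(x)|u|^{p-2}u \quad \text{in } \Omega, \qquad u=0 \text{ on } \partial\Omega. $$
Setting $\widetilde w(x) := \lambda_1 w(x)$, the function $u$ is a nontrivial solution of
$-\Delta_p u = \widetilde w(x)|u|^{p-2}u$ with zero Dirichlet boundary data. Since $\lambda_1>0$ (see the appendix invoked in the paper on eigenvalues of the $p$-Laplacian) and $w\geq 0$ satisfies the corresponding integrability hypothesis in each case, the rescaled weight $\widetilde w$ inherits the same properties with $\|\widetilde w\|_{L^q} = \lambda_1 \|w\|_{L^q}$ for any $q\in[1,\infty]$.

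In the case $p>N$, I would invoke Theorem \ref{Lyapunov-using-Morrey} applied to $\widetilde w\in L^1(\Omega)$, which yields
$$ \frac{C}{r_\Omega^{p-N}} \leq \|\widetilde w\|_{L^1(\Omega)} = \lambda_1 \|w\|_{L^1(\Omega)}, $$
and solving for $\lambda_1$ gives the first inequality of the corollary. For $p<N$, I would instead apply Theorem \ref{Lyapunov-pn} with exponent $s>N/p$ to $\widetilde w\in L^s(\Omega)$, obtaining
$$ \frac{C}{r_\Omega^{(sp-N)/s}} \leq \|\widetilde w\|_{L^s(\Omega)} = \lambda_1 \|w\|_{L^s(\Omega)}, $$
from which the second bound follows. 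In both cases the constant $C$ is the same universal (resp.\ capacity-dependent) constant produced by the Lyapunov theorems.

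There is really no hard step here: the corollary is a direct consequence of the Lyapunov inequalities once one observes that a first eigenfunction solves a problem of the form covered by those theorems with weight $\lambda_1 w$. The only small point to verify is that the rescaling by $\lambda_1$ does not destroy the hypotheses (nonnegativity and membership in $L^1$ or $L^s$), which is immediate since $\lambda_1>0$. Consequently, the proof amounts to the two one-line applications above and has no real obstacle.
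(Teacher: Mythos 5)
Your proposal is correct and is exactly the paper's argument: the authors also obtain the corollary by observing that a first eigenfunction solves the equation with weight $\lambda_1 w$ and then applying Theorems \ref{Lyapunov-using-Morrey} and \ref{Lyapunov-pn} to that rescaled weight. No discrepancy to report.
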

This Corollary follows directly from Theorems  \ref{Lyapunov-using-Morrey} and
\ref{Lyapunov-pn}, by replacing $w$ with $\lambda_1 w$.

\bigskip

In Section \S 5, we apply the bounds of Corollary \ref{core} to eigenvalue problems.

First, we show that the  powers of the inner radius appearing in Theorems
\ref{Lyapunov-using-Morrey} and  \ref{Lyapunov-pn}  are optimal:

\begin{prop}\label{optimal} Let $B(0,R)$ be the ball of radius $R$ centered at the
origin, and let
 $$\gamma = \left\{\begin{array}{lrl}
  p-N &  & \mbox{ if } p>N \\ \\
\displaystyle  \frac{sp-N}{s} & & \mbox{ if } p<N.\end{array}\right.$$

\begin{itemize}
\item Let $R>1$. For any $\beta < \gamma$, and $C$ fixed, there exists a
    non-negative weight $w$, and a solution $u_\beta \in W^{1,p}_0(B(0,R))$ of
$$ \left\{
\begin{array}{rclll}
-\Delta_p u  &=&   w(x) |u|^{p-2} u & \hbox{in} & B(0,R) \\
u & = & 0 & \hbox{on} & \partial B(0,R) \\
\end{array}
\right. $$
 such that the inequality
$$ \frac{C}{R^{\beta}} \leq  \| w \|_{L^1(B(0,R)} $$
 does not hold.

\item Let $R<1$. For any $\beta > \gamma$, and $C$ fixed,  there exists a
    non-negative weight $w$, and a solution $u_\beta \in W^{1,p}_0(B(0,R))$ of
$$ \left\{
\begin{array}{rclll}
-\Delta_p u  &=&   w(x) |u|^{p-2} u & \hbox{in} & B(0,R) \\
u & = & 0 & \hbox{on} & \partial B(0,R) \\
\end{array}
\right. $$
 such that the inequality
$$ \frac{C}{R^{\beta}} \leq  \| w \|_{L^1(B(0,R)} $$
 does not hold.
 \end{itemize}
\end{prop}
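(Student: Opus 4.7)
The plan is to take $w$ constant and $u$ to be the first Dirichlet eigenfunction of $-\Delta_p$ on $B(0,R)$, then read off the conclusion from the scaling of the first eigenvalue. Let $\lam_1(R)$ denote the first Dirichlet eigenvalue of $-\Delta_p$ on $B(0,R)$ with constant weight one, and let $\phi_R \in W^{1,p}_0(B(0,R))$ be a positive first eigenfunction (the existence, positivity and simplicity needed here are precisely those recalled in the appendix of the paper). The $p$-homogeneity of $\Delta_p$ under the dilation $x \mapsto Rx$ immediately yields the scaling identity $\lam_1(R) = \lam_1(1)\, R^{-p}$.

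Setting $u_\beta := \phi_R$ and choosing $w(x) \equiv \lam_1(R)$ on $B(0,R)$, the pair solves the stated Dirichlet problem, and a direct computation gives
\[ \|w\|_{L^1(B(0,R))} = \lam_1(1)\,|B(0,1)|\, R^{N-p}, \qquad \|w\|_{L^s(B(0,R))} = \lam_1(1)\,|B(0,1)|^{1/s}\, R^{(N-sp)/s}. \]
In the regime $p > N$ the proposed inequality $C/R^\beta \le \|w\|_{L^1}$ reduces to $C \le \lam_1(1)\,|B(0,1)|\, R^{\beta - (p-N)}$, and with $\gamma = p - N$ the right-hand side tends to $0$ as $R \to \infty$ whenever $\beta < \gamma$, and as $R \to 0^+$ whenever $\beta > \gamma$; this yields both items in that regime. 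The case $p < N$ is formally identical with the $L^s$ norm in place of $L^1$ (which seems to be what is intended in the second bullet): one has $\|w\|_{L^s} = c\, R^{-\gamma}$ for $\gamma = (sp-N)/s$, and the same large-$R$/small-$R$ dichotomy produces the two items.

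There is essentially no technical obstacle: the scaling identity for $\lam_1$ is elementary, and the role of the conditions $R > 1$ and $R < 1$ is only to place the accessible limit ($R \to \infty$ or $R \to 0^+$) on the side where $R^{\beta-\gamma} \to 0$. If one prefers a nonconstant weight, a family concentrated in a ball of radius $\ve \to 0$ (when $p < N$) or $\ve \to R$ (when $p > N$) delivers the same conclusion via the same scaling, but the constant choice above is the cleanest presentation.
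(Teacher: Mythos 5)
Your proof is correct and reaches the stated conclusion, but by a genuinely different and in fact simpler route than the paper. The paper's proof takes the weight $w=\lambda_1\,\chi_{[0,\varepsilon]}(|x|)\,|x|^{1-N}$ concentrated near the origin, bounds the first eigenvalue on $B(0,R)$ by the first eigenvalue on $B(0,\varepsilon)$ via domain monotonicity of the Rayleigh quotient, reduces to a one-dimensional radial problem to estimate the latter by $\pi_p^p\varepsilon^{N-1-p}$, and finally optimizes over $\varepsilon=R^{\alpha}$ with $\alpha\in(\beta/(p-N),1)$; you replace all of this by the constant weight $w\equiv\lambda_1(R)$ together with the elementary dilation identity $\lambda_1(R)=\lambda_1(1)R^{-p}$. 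Your example is, if anything, sharper: the constant weight attains $\|w\|_{L^1}=\lambda_1(1)|B(0,1)|R^{-(p-N)}$ exactly, i.e.\ the extremal decay rate, whereas the paper's concentrated weight with $\varepsilon=R^{\alpha}$, $\alpha<1$, only yields the weaker bound $\|w\|_{L^1}\le c\,R^{-\alpha(p-N)}$. Both arguments carry the same (harmless) quantifier caveat: for a fixed $C$ the violation is produced by taking $R$ large enough (resp.\ small enough), so what is actually proved is that no universal constant $C$ can serve for all $R>1$ (resp.\ all $R<1$); this is also all the paper's own argument establishes, since its final ``contradiction'' $R^{\alpha(p-N)}\le cR^{\beta}$ only fails once $R^{\alpha(p-N)-\beta}$ exceeds $c$. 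You are also right that the $L^1$ norm in the second regime should be read as the $L^s$ norm when $p<N$, consistently with Theorem \ref{Lyapunov-pn}.
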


The result follows by computing a bound
 of the first eigenvalue of the $p$-Laplacian on a ball
  with a radial weight restricted to a small ball of radius
$\varepsilon$ for a suitable $\ve$.

 \bigskip

Finally,  we compare the lower bounds for the first eigenvalue of the $p-$Laplacian in
Corollary \ref{core} with the ones obtained with other techniques.

A classical tool for problems without weights is the Faber-Krahn inequality,
$$
\lambda_1(B) \le \lambda_1(\Omega),
$$
where $B$ is the ball with Lebesgue measure $|B|=|\Omega|$. Several proofs of this
inequality for the $p$-Laplacian appeared in the literature, and they are based on the
ideas of Talenti. Some improvements involving measures of the asymmetry of the domain
$\Omega$ are known, see \cite{Bat, tanos}.

For bounded weights, a Sturmian comparison argument combined with the variational
characterization of the first eigenvalue (see equation \eqref{varsist} in the
Appendix), enable us to replace $w$ with the norm $\|w\|_{L^{\infty}}$, obtaining now
lower bounds for $\lambda_1$.

For arbitrary weights, there are few inequalities involving their norms and the
measure of the domain, namely the works of Anane \cite{Ana} and Cuesta \cite{Cu}.

We show that for certain domains and weights, the bounds given by Lyapunov inequality
are better.

\bigskip
 We close the paper with a
short Appendix where we include some basic facts about $p-$Laplacian eigenvalues.

\section{Lyapunov's inequality for $p>N$}

Let us recall first Morrey inequality:
\begin{thm}[Morrey] If $p>n$, there exists a constant $C(N,p)$ such that for all
$u\in W_{0}^{1,p}(\Omega)$,
\[
|u(x)-u(y)|\leq C(n,p)\left\Vert \nabla u\right\Vert _{L^{p}}|x-y|^{\alpha}\,\] for
all $x$, $y\in\overline{\Omega}$, and $\alpha=1-\frac{N}{p}$.
\end{thm}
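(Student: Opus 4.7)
The plan is to prove Morrey's inequality by the classical argument based on estimating pointwise values of a smooth function via integral averages of its gradient, then extending to $W_0^{1,p}(\Omega)$ by density. First, since $u\in W_0^{1,p}(\Omega)$, I extend $u$ by zero to a function $\tilde u\in W^{1,p}(\R^N)$. By density of $C_c^{\infty}(\Omega)$ in $W_0^{1,p}(\Omega)$, it suffices to establish the inequality for $u\in C_c^{\infty}(\R^N)$ with a constant depending only on $N$ and $p$; the continuous representative on $\overline{\Omega}$ and the final Hölder bound then follow by uniform convergence.

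The key intermediate estimate is that, for smooth $u$ and any ball $B(x,r)$,
\[
\fint_{B(x,r)}|u(y)-u(x)|\,dy \le C(N)\int_{B(x,r)}\frac{|\nabla u(z)|}{|x-z|^{N-1}}\,dz.
\]
I would derive this by writing $u(y)-u(x)=\int_0^1 \nabla u(x+t(y-x))\cdot (y-x)\,dt$, integrating in $y$ over $B(x,r)$, and swapping the order of integration: the change of variables $z=x+t(y-x)$ converts the double integral on the right into a single radial integral of $|\nabla u(z)|/|x-z|^{N-1}$. Applying Hölder's inequality with exponents $p$ and $p'=p/(p-1)$ then gives
\[
\fint_{B(x,r)}|u(y)-u(x)|\,dy \le C(N)\,\|\nabla u\|_{L^p}\left(\int_{B(x,r)}\frac{dz}{|x-z|^{(N-1)p'}}\right)^{1/p'}.
\]
The radial integral on the right converges precisely when $(N-1)p'<N$, which is equivalent to $p>N$, and a direct computation in polar coordinates yields $C(N,p)\,r^{N-(N-1)p'}$. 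Taking the $p'$-th root produces the exponent $1-N/p=\alpha$, so
\[
\fint_{B(x,r)}|u(y)-u(x)|\,dy \le C(N,p)\,r^{\alpha}\,\|\nabla u\|_{L^p}.
\]

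To finish, given $x,y\in\R^N$ with $r=|x-y|$, I pick a ball $B$ containing both $x$ and $y$ whose radius is comparable to $r$ (for instance $B=B(x,2r)\cap B(y,2r)$, whose Lebesgue measure is a fixed multiple of $r^N$), and use the triangle inequality
\[
|u(x)-u(y)|\le \fint_{B}|u(x)-u(z)|\,dz + \fint_{B}|u(y)-u(z)|\,dz.
\]
Each average on the right is controlled by the displayed estimate above (up to adjusting the implicit constant to account for the fact that $B$ is contained in $B(x,2r)$ and $B(y,2r)$ respectively), so I obtain $|u(x)-u(y)|\le C(N,p)\,\|\nabla u\|_{L^p}\,r^{\alpha}$, which is the desired inequality.

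The main technical point is the convergence of the radial integral, which is exactly the place where the hypothesis $p>N$ is used; everything else is routine. Once the inequality is proved for $C_c^{\infty}$, passing to $u\in W_0^{1,p}(\Omega)$ via a Cauchy sequence of smooth approximations $u_k$ shows that the sequence is uniformly $\alpha$-Hölder, hence converges uniformly on $\overline{\Omega}$ to a continuous representative of $u$ that satisfies the inequality.
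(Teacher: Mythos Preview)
The paper does not actually prove Morrey's theorem; it merely recalls it as a classical result and then applies it in the proof of Theorem~\ref{Lyapunov-using-Morrey}. So there is no ``paper's own proof'' to compare your argument against. Your proof is correct and is the standard textbook argument (indeed, the paper cites Evans for precisely the Riesz-potential estimate you use as your key intermediate step).

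It is worth noting, however, that the very ingredients you assemble here---the averaging trick
\[
|u(x)-u(y)|\le \fint_A |u(x)-u(z)|\,dz + \fint_A |u(y)-u(z)|\,dz
\]
over $A=B(x,r)\cap B(y,r)$, the pointwise bound by the Riesz potential $\int_{B(x,r)} |\nabla u(z)|\,|x-z|^{1-N}\,dz$, and then H\"older---are exactly what the paper deploys in its proof of Theorem~\ref{singular}, the weighted generalization. There the authors insert the weight $v^{1/p}v^{-1/p}$ before applying a three-term H\"older inequality, which is the natural extension of your two-term H\"older step. So while your proof of Morrey is not needed for the paper's purposes, it is in perfect alignment with the method the authors use downstream.
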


Now we are ready to prove Theorem \ref{Lyapunov-using-Morrey}.

\begin{proof}[Proof of Theorem \ref{Lyapunov-using-Morrey}]
  Let $u\in W_{0}^{1,p}(\Omega)$ be a
nontrivial solution of
$$-\Delta_{p}u=w(x)|u|^{p-2}u$$
with Dirichlet boundary conditions. Multiplying by $u$ and
integrating by parts, we obtain
 $$\int_{\Omega}|\nabla u|^{p}=\int_{\Omega}w(x)|u|^{p}.$$

 Since $p>N$, $u$ is continuous and let us choose
$c\in\Omega$ a the point of $\overline{\Omega}$ where $|u(x)|$ achieves its maximum.
Then, for $y=c$ and $x\in\partial\Omega$ we have that
\[
|u(c)|\leq C(N,p)\left(\int_{\Omega}|\nabla
u|^{p}dx\right)^{\frac{1}{p}}|x-c|^{\alpha}.\]

By using that
 $|x-c|\leq r_{\Omega}$, the inner radius of
$\Omega$, we get
\[
|u(c)|\leq C(N,p)\left(\int_{\Omega}w(x)|u|^{p}\,
dx\right)^{\frac{1}{p}}r_{\Omega}^{\alpha}.\]

 Hence,
\[
|u(c)|\leq C(N,p)|u(c)|\left(\int_{\Omega}w(x)dx\right)^{\frac{1}{p}}r_{\Omega}^{\alpha}\] and
cancelling out $|u(c)|$ we have the Lyapunov inequality
\[
\frac{1}{r_{\Omega}^{\alpha}}\leq C(N,p)\left(\int_{\Omega}w(x)dx\right)^{\frac{1}{p}},\]
 with $\alpha=1-\frac{N}{p}$.

\bigskip
 The
proof is finished.
\end{proof}

\begin{rem}\label{remarka}
In particular, let $\lambda_{1}$be the first eigenvalue of
\[
-\Delta_{p}u=\lambda w(x)|u|^{p-2}u \] in $\Omega$ with zero Dirichlet boundary
conditions in $\partial \Omega$. We have
\begin{equation}\label{cotaauto}
\frac{C(N,p)^{-p}}{r_{\Omega}^{p-N} \|w\|_1 }\leq
\lambda_{1} ,
\end{equation}
 which gives the lower bound for $\lambda_{1}$ in Corollary \ref{core}.
\end{rem}

\subsection{Singular and Degenerate Weights}

The following Lemma extend the results in \cite{Kil} for different weights in the
function and its distributional gradient:

\begin{lema}\label{easylem} For $v, w \in A_p$, the space
$W_0^{1,p}(\R^N, v, w)$ is the completion of $C_0^{\infty}(\R^N)$ with the norm
$$
\|\cdot \|_{p,v, w} : =  (\|\nabla \cdot\|^p_{[L^p(\R^N, v)]^N} + \|\cdot \|^p_{L^p(\R^N,w)})^{1/p}.
$$
\end{lema}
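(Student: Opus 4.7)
The plan is to follow Kilpel\"ainen's strategy from \cite{Kil}, which treats the case $v=w$, and adapt it to the two-weight setting by running the truncation-and-mollification scheme while tracking the two $A_p$ weights independently. The inclusion $C_0^\infty(\R^N) \subset W_0^{1,p}(\R^N, v, w)$ is immediate from $v, w \in L^1_{\mathrm{loc}}$, and Cauchy sequences in the $\|\cdot\|_{p,v,w}$-norm have limits inside the product space $L^p(\R^N, w) \times [L^p(\R^N, v)]^N$. The substance of the lemma is therefore (i) density of $C_0^\infty(\R^N)$ in $W_0^{1,p}(\R^N, v, w)$, and (ii) the consistency check that the $[L^p(\R^N,v)]^N$-limit of the gradients is in fact the distributional gradient of the $L^p(\R^N,w)$-limit, which follows by testing against $C_0^\infty$ functions and using the local integrability of both weights.

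The first step is truncation to compact support. Given $u \in W_0^{1,p}(\R^N, v, w)$, pick cutoffs $\eta_R \in C_0^\infty(\R^N)$ with $\eta_R \equiv 1$ on $B(0,R)$, supported in $B(0, 2R)$, and $|\nabla \eta_R| \leq C/R$, and set $u_R := \eta_R u$. Dominated convergence immediately yields $u_R \to u$ in $L^p(\R^N, w)$ and $(\eta_R - 1)\nabla u \to 0$ in $[L^p(\R^N, v)]^N$. The only delicate piece is the commutator
\[
\int |u|^p |\nabla \eta_R|^p v\,dx \;\le\; \frac{C^p}{R^p} \int_{R \le |x| \le 2R} |u|^p v\,dx ,
\]
which should be driven to zero by a weighted Hardy- or Poincar\'e-type inequality associated to the $A_p$ weight $v$ on dyadic annuli (such estimates are available because $A_p$ weights are doubling and enjoy the Muckenhoupt--Wheeden self-improvement).

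The second step is mollification. Fix a compactly supported $u_R$, let $\phi_\ve$ be a standard approximate identity, and set $u_R \ast \phi_\ve \in C_0^\infty(\R^N)$. Convolution with $\phi_\ve$ is pointwise controlled by the Hardy--Littlewood maximal operator, which by Muckenhoupt's theorem is bounded on $L^p(\R^N, \omega)$ for every $\omega \in A_p$. Applying this once with $\omega = w$ to $u_R$ and once with $\omega = v$ to $\nabla u_R$ (using that $\nabla (u_R \ast \phi_\ve) = (\nabla u_R) \ast \phi_\ve$ for compactly supported $u_R$), one obtains $u_R \ast \phi_\ve \to u_R$ in the $\|\cdot\|_{p,v,w}$-norm as $\ve\to 0$. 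A diagonal choice $\ve = \ve(R)$ then delivers the desired approximation.

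The main obstacle is the cross term $u\,\nabla \eta_R$ in the truncation step: $u$ is natively controlled in $L^p(w)$, whereas the required estimate lives in $L^p(v)$. This is precisely where Kilpel\"ainen's single-weight argument must be genuinely modified; the cleanest remedy is an $A_p$-weighted Hardy inequality for $v$ on the annuli $R\le|x|\le 2R$, which bounds the term by $\int_{R \le |x| \le 2R} |\nabla u|^p v\,dx$ and hence pushes it to zero using only $\nabla u \in [L^p(v)]^N$. The mollification step, in contrast, decouples cleanly between the two weights, since each enters only its own application of Muckenhoupt's theorem.
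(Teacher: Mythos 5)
Your mollification step is, in substance, the paper's \emph{entire} proof: the authors simply convolve $u$ with a mollifier and invoke Lemma 1.5 of \cite{Kil} twice, once with the weight $w$ for $u$ and once with the weight $v$ for $\nabla u$; the observation that convolution decouples the two weights (because it commutes with the gradient) is the whole content of their argument, and your maximal-function/Muckenhoupt justification is exactly how that cited lemma is proved. Where you go beyond the paper is the truncation step, and that is where your argument has a genuine gap. The inequality you invoke to kill the cross term,
\[
\frac{1}{R^p}\int_{R\le|x|\le 2R}|u|^p v\,dx \;\le\; C\int_{R\le|x|\le 2R}|\nabla u|^p v\,dx,
\]
is not a consequence of $v\in A_p$: it fails for any $u$ that is constant on the annulus. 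What doubling and reverse H\"older actually yield is a weighted Poincar\'e inequality controlling the oscillation $\int|u-u_{A_R}|^p v$, and a genuine Hardy inequality $\int |u|^p|x|^{-p}v\,dx\le C\int|\nabla u|^p v\,dx$ on $\R^N$ is not available for arbitrary $A_p$ weights either (already for $v\equiv 1$ it fails when $p\ge N$). After subtracting the mean you are left with $|u_{A_R}|^p\,v(A_R)/R^p$, and this is precisely where the two weights collide: $u$ is controlled only in $L^p(w)$, so estimating this term forces a comparison between $v(A_R)/R^p$ and $w(A_R)$ that is not assumed anywhere; it is not even clear a priori that $\int_{A_R}|u|^p v\,dx$ is finite for $u$ in the space.

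To be fair, the published proof silently omits truncation altogether (its mollified approximants are smooth but not compactly supported), so the difficulty you isolated is real and is not resolved in the paper either; but your proposed remedy does not repair it. Note that in the single-weight case $v=w$ the cross term is trivial, since $R^{-p}\int_{A_R}|u|^p w\,dx\le R^{-p}\|u\|^p_{L^p(w)}\to 0$; the two-weight truncation genuinely needs either an additional hypothesis linking $v$ and $w$ on large annuli or a different cutoff construction, and as written your proof does not supply one.
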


\begin{proof}
The proof follows by taking $u \in W_0^{1,p}(\R^N, v, w)$ and regularizing it by
convolution with a mollifier $\eta_j$. Now, from Lemma 1.5 in \cite{Kil},
\begin{align*}
\eta_j * u \to u & \qquad \mbox{ in } L^p(\R^N, w) \\
\nabla (\eta_j * u ) = \eta_j *\nabla u \to \nabla u & \qquad \mbox{ in } [L^p(\R^N,
v)]^N
\end{align*}
that is, $\eta_j *u\to u$ in $W_0^{1,p}(\R^N, v, w)$.
\end{proof}

We are ready to prove Theorem \ref{singular}.

\begin{proof}
Thanks to Lemma \ref{easylem}, we can choose a smooth function $u$. Now, given $x, y
\in \bar{\Omega}$, such that $r=|x-y| \le r_{\Omega}$, let us call $A=B(x,r) \cap
B(y,r)$. Hence,
\begin{align*}
|u(x) - u(y)| & \le \frac{1}{|A|}  \int_A |u(x)-u(z)|dz + \frac{1}{|A|}  \int_A |u(y)-u(z)|dz \\
\\ &\le  C   \int_{B(x,r)} \frac{ |\nabla u(z)| }{ |x-z|^{N-1} }dz +   C \int_{B(y,r)} \frac{
 |\nabla u(z)| }{ |y-z|^{N-1} }dz \\
\\  & = I_1+I_2
\end{align*}
where the constant $C$ depends only on $N$, see for instance, Evans \cite{Evans}.

Let us bound now $I_1$. We need to include the coefficient $v$ appearing in the
equation, and let us call $B=B(x,r)$. By using Holder's inequality:
 \begin{align*}
 I_1 &  =  C   \int_{B}
  \frac{|\nabla u(z)|}{|x-z|^{N-1}}v^{\frac1p}v^{-\frac1p}dz \\ \\
  &   \le  C  \left( \int_{B} v |\nabla u(z)|^p dz \right)^{\frac1p}
   \left(\int_{B} \frac{1}{|x-z|^{q(N-1)}} dz\right)^{\frac{1}{q}}
 \left(\int_{B} v^{-\frac{s}{p}} dz\right)^{\frac{1}{s}}
 \end{align*}
 where
 \begin{align*}
 & \frac1p+\frac1q+\frac1s=1,  \\ \\ & s = \frac{p}{t-1}.
 \end{align*}

Now, we have following bounds:
  \begin{align} &
\int_{B} v(z) |\nabla u(z)|^p dz   \le  \int_{\Omega} w(z)|u(z)|^p dz, \label{zerosob} \\  &
\int_{B} \frac{1}{|x-z|^{q(N-1)}} dz
      \le c r_{\Omega}^{q- qN+N}, \label{zerosob2}
 \\
 & \int_{B} v^{-\frac{s}{p}}(z) dz \le g(r_{\Omega}).  \label{zerosob3}
 \end{align}
 We have used that $v$ is positive, and by integrating by parts
the equation multiplied by $u$ in $\Omega$, we get  the first inequality. The second
one follows by integrating in polar coordinates in a bigger ball of radius
$r_{\Omega}$, the constant $c$ can be computed explicitly and depends only on $N$, $p$
and $q$. The last one was defined in this way in the hypotheses of the Theorem.

The bound for $I_2$ is almost identical, although we need first to impose some extra
condition on $u$. Since we are working in $W_0^{1,p}$, we can extend any function by
zero outside $\Omega$, and we can take a smooth function $u$ supported in $\Omega$.
So, we can integrate only over $B(y,r) \cap \Omega$ in the first inequality
\eqref{zerosob}, and we get
$$
|u(x) - u(y)| \le  C \;     r_{\Omega}^{1- N+\frac{N}{q}}  g(r_{\Omega})^{\frac{1}{s}}
\left(  \int_{\Omega} w(z)|u(z)|^p dz \right)^{\frac{1}{p}}
$$
where $C$ is a universal constant depending only on $N$, $p$ and $q$.

We are able to choose yet the points $x$ and $y$, and this is the last step of the
proof. Let $x$ be the point where $|u|$ is maximized, and $y$ one of the points in
$\partial \Omega$ which minimizes $|x-y|$. So, $u(y)=0$ and  $|x-y|< r_{\Omega}$.

After bounding $|u(z)| \le |u(x)|$ at the right hand side, and canceling out with the
one in the left hand side, we get
$$
1 \le C(p, t, N) \;    r_{\Omega}^{p- pN+\frac{pN}{q}}  g(r_{\Omega})^{\frac{p}{s}}
  \int_{\Omega} w(z) dz .
$$

Finally, let us observe that the  relationship between Holder's exponent implies that
 $$ \frac{p}{q} = p-t,  \qquad \frac{p}{s} =t-1.
$$
The proof is finished.
\end{proof}

\begin{rem}
Let us note that inequality \eqref{zerosob2} holds when $q- qN+N>0$, and $q\ge p'$ in
Holder's inequality. That is,
$$
\frac{p}{p-1} < q <\frac{N}{N-1}
$$
which makes sense  because $p>N>1$.

On the other hand, the bigger is $q$, the bigger is $s$. When $q \to \frac{N}{N-1}$,
we have that $s\to \frac{pN}{p-N}$, and the integral in inequality \eqref{zerosob3} is
well defined when $v\in A_t$ with $$t< p/N.$$
\end{rem}

As an application of Theorem \ref{singular} we have the following result for
quasilinear problems involving the distance to the boundary.

\begin{prop}
Let $\Omega\in \R^N$ a bounded open set, $p>N$, and $u\in W_0^{1,p}(\Omega,
d^{\gamma}, w)$ a nontrivial solution of
 $$
-div(d_{\Omega}^{\gamma}(x)|\nabla u|^{p-2}\nabla u) = w(x)|u|^{p-2}u
$$
in $\Omega$ with zero Dirichlet boundary conditions in $\partial\Omega$, where
$d_{\Omega}(x)$ is the distance to the boundary. Then,
$$
1 \le C  \;    r_{\Omega}^{p-N-\gamma} \int_{\Omega} w(z) dz,
$$
where $C$ depends only on $N$, $p$, and $\gamma$.
\end{prop}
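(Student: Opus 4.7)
The proposition is a direct corollary of Theorem~\ref{singular} applied to the weight $v(x) = d_\Omega(x)^\gamma$. The task is to verify the hypotheses of that theorem for an admissible parameter $t$ and to compute the resulting auxiliary function $g$ for this specific $v$.

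First, I would choose a parameter $t$ with $\max(1, 1+\gamma) < t < p/N$. The fact recalled in the paper --- that $d_\Omega^\gamma \in A_t(\R^N)$ precisely when $-1 < \gamma < t-1$, with an $A_t$-constant depending only on $N, \gamma, t$ --- guarantees that the hypotheses of Theorem~\ref{singular} hold. The existence of such a $t$ amounts to the implicit restriction $\gamma < p/N - 1$ on the range of the parameter, which is assumed so that the conclusion is non-vacuous.

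Second, I would estimate
$$g(r_\Omega) \;=\; \sup_{x\in \Omega} \int_{B(x,r_\Omega)} d_\Omega(z)^{-\gamma/(t-1)}\,dz,$$
and establish the scaling bound $g(r_\Omega) \le C\, r_\Omega^{\,N-\gamma/(t-1)}$ with $C$ depending only on $N, \gamma, t$. When $-1 < \gamma \le 0$ this is immediate: the exponent $-\gamma/(t-1)$ is nonnegative, and the $1$-Lipschitz estimate $d_\Omega(z) \le d_\Omega(x) + |z-x| \le 2 r_\Omega$ on $B(x,r_\Omega)$ (valid because $d_\Omega(x) \le r_\Omega$) bounds the integrand by a constant multiple of $r_\Omega^{-\gamma/(t-1)}$, and multiplying by $|B(x,r_\Omega)| \sim r_\Omega^N$ yields the claim. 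When $0 < \gamma < t-1$ the exponent $\gamma/(t-1)$ lies in $(0,1)$ and $d_\Omega^{-\gamma/(t-1)}$ is locally integrable; the scaling estimate then follows from a layer-cake computation exploiting the codimension-one nature of $\partial\Omega$ to control the superlevel sets $\{z \in B(x,r_\Omega) : d_\Omega(z) < s\}$.

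Finally, substituting into~\eqref{lyapsing} and using $g(r_\Omega)^{t-1} \le C\, r_\Omega^{(t-1)N-\gamma}$, the powers of $r_\Omega$ collapse:
$$r_\Omega^{\,p-tN}\, g(r_\Omega)^{t-1} \;\le\; C\, r_\Omega^{\,p-tN+(t-1)N-\gamma} \;=\; C\, r_\Omega^{\,p-N-\gamma},$$
which delivers the stated inequality with a constant that absorbs the auxiliary $t$ and therefore depends only on $N, p, \gamma$. The main obstacle is the scaling bound on $g$ in the regime $\gamma > 0$, where the integrand blows up along $\partial\Omega$; uniformity in $x \in \Omega$ is delicate because for shapes such as thin cones the portion of $B(x,r_\Omega)\cap\Omega$ on which $d_\Omega$ is small may be a substantial fraction of the ball, and it is precisely the codimension-one behaviour of $\partial\Omega$ together with the constraint $\gamma/(t-1)<1$ that keeps the total integral finite and dimensional.
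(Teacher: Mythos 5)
Your overall strategy is the right one --- specialize Theorem \ref{singular} to $v=d_{\Omega}^{\gamma}$, pick $t$ with $1+\gamma<t<p/N$, and collapse the powers via $(N-\tfrac{\gamma}{t-1})(t-1)+p-tN=p-N-\gamma$ --- and your treatment of $-1<\gamma\le 0$ and the final algebra are correct. But for $0<\gamma<t-1$ there is a genuine gap. You claim $g(r_{\Omega})=\sup_{x\in\Omega}\int_{B(x,r_{\Omega})}d_{\Omega}(z)^{-\gamma/(t-1)}dz\le C(N,\gamma,t)\,r_{\Omega}^{N-\gamma/(t-1)}$ by a layer-cake argument resting on the ``codimension-one nature of $\partial\Omega$.'' The Proposition assumes only that $\Omega$ is a bounded open set, so no such structure is available: the superlevel-set bound you need, $|\{z\in B(x,r_{\Omega}):d_{\Omega}(z)<s\}|\lesssim s\,r_{\Omega}^{N-1}$, can fail by an arbitrarily large factor. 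For instance, take $\Omega$ to be one ball of radius $r_{\Omega}$ together with a packing of $\sim(r_{\Omega}/\ve)^{N}$ disjoint balls of radius $\ve$; centering the $r_{\Omega}$-ball of the supremum at one of the small components gives an integral of order $(r_{\Omega}/\ve)^{N}\ve^{N-\gamma/(t-1)}=r_{\Omega}^{N}\ve^{-\gamma/(t-1)}$, which blows up as $\ve\to 0$. So the uniform scaling bound on $g(r_{\Omega})$, as you state it, is simply not true at this level of generality, and you flag the difficulty yourself without resolving it.

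The paper sidesteps this entirely: it does not estimate $g(r_{\Omega})$ over the full $r_{\Omega}$-ball, but re-enters the proof of Theorem \ref{singular} and observes that the only ball that actually occurs in estimate \eqref{zerosob3} is $B(x,r)$ with $r=|x-y|=d_{\Omega}(x)$, where $x$ is the maximum point of $|u|$ and $y$ its nearest boundary point. That ball is inscribed in $\Omega$, hence $d_{\Omega}(z)\ge r-|z-x|$ on it, and
\begin{equation*}
\int_{B(x,r)}(r-|z-x|)^{-\frac{s\gamma}{p}}dz=c_{N}\,r^{N-\frac{s\gamma}{p}}\int_{0}^{1}(1-\hat\rho)^{-\frac{s\gamma}{p}}\hat\rho^{N-1}d\hat\rho,
\end{equation*}
a Beta-type integral that converges exactly because $\tfrac{s\gamma}{p}=\tfrac{\gamma}{t-1}<1$; no regularity of $\partial\Omega$ is used. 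To repair your argument, replace your global bound on $g(r_{\Omega})$ by this local estimate on the inscribed ball (and note that the paper's clean treatment of $\gamma<0$ is essentially your pointwise bound $d_{\Omega}\le r_{\Omega}$, which is fine).
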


In order to prove this Proposition, we can repeat the previous proof, although only
inequality \eqref{zerosob3} depends on $d^{\gamma}_{\Omega}$. So, we will  improve
this bound by integrating in  $B(x, d_{\Omega}(x))$ instead of $B(x, r_{\Omega})$.

\begin{proof}  We divide the proof in two cases, depending on the sign of $\gamma$.

First, we consider $\gamma <0$. Given  $z\in \Omega$, we choose $y\in \partial \Omega$
with $r=|x-y| = d_{\Omega}(x)$, clearly we have $r \le r_{\Omega}$. After a
translation if necessary, we can suppose that $y=0$, and we have $d_{\Omega}(z) \le
|z|$, and then
$$ d_{\Omega}^{-\frac{s\gamma}{p}}(z) \ge |z|^{-\frac{s\gamma}{p}}.$$
Hence, we can estimate $g(r_{\Omega})$ by computing
$$
\int_{B(x, r)}
d_{\Omega}^{-\frac{s\gamma}{p}}(z)   \le \int_{B(x,r) }  |z|^{-\frac{s\gamma}{p}}dz =
 r^{N-\frac{s\gamma}{p} } \int_{B(x/r,1)} |\eta|^{-\frac{s\gamma}{p}}d\eta  \le  C r_{\Omega}^{N-\frac{s\gamma}{p} },
 $$
where in the last step we changed variables, $\eta=z/r$.

So, we can bound
 $$\int_{B(x, r)}
d_{\Omega}^{-\frac{s\gamma}{p}}(z)   \le C \; r_{\Omega}^{N-\frac{s\gamma}{p} }.
 $$

\bigskip
Let us consider now $\gamma>0$. Given  $z\in \Omega$  and $y\in \partial \Omega$ with
$r=|x-y|= d_{\Omega}(x) \le r_{\Omega}$ as before, clearly we have $r \le r_{\Omega}$.
After a translation if necessary, we can suppose that $x=0$, and we have
$d_{\Omega}(z) \ge d_{\partial B(0,r)}(z)$, the distance to the boundary of the ball.

Then, since  $\gamma >0$,
$$d_{\Omega}^{-\frac{s\gamma}{p}}(z) \le d_{\partial
B(0,r)}^{-\frac{s\gamma}{p}}(z),$$ and
 \begin{align*}
 \int_{B(0, r)} d_{\Omega}^{-\frac{s\gamma}{p}}(z)  & \le \int_{B(0,r) }  (r-|z|)^{-\frac{s\gamma}{p}}dz
  \\
 & =c_N \int_0^r (r-\rho)^{-\frac{s\gamma}{p}} \rho^{N-1} d\rho \\
& = c_N r^{N-\frac{s\gamma}{p}} \int_0^1 (1-\hat{\rho})^{-\frac{s\gamma}{p}} \hat{\rho}^{N-1} d\hat{\rho} \\
& = C \; r^{N-\frac{s\gamma}{p}}.
 \end{align*}
Again, we have the bound
 $$
\int_{B(0, r)} d_{\Omega}^{-\frac{s\gamma}{p}}(z)  \le  C \; r_{\Omega}^{N-\frac{s\gamma}{p} }.
 $$

The last step is to replace this bound instead of the power of $ g(r_{\Omega})$ in
Lyapunov's inequality given by Theorem \ref{Lyapunov-using-Morrey}. By using that $p/s
=t-1$, we have
 $$
1 \le C  \;    r_{\Omega}^{p-N-\gamma} \int_{\Omega} w(z) dz
 $$
 and the
proof is finished.
\end{proof}

\section{Lyapunov-type inequality for $p<N$}

Let us prove now Theorem \ref{Lyapunov-pn}.

\bigskip
\begin{proof} Let us define
$$q= \alpha p + (1-\alpha)p^*,$$
where $p^*$ is the Sobolev conjugate exponent, and  $\alpha \in (0,1)$ which will be
chosen later.

Then, we have
$$ \frac{1}{r_{\Omega}^{\alpha p}}
 \int_\Omega |u|^q \; dx   \leq  \int_\Omega
\frac{|u|^q}{d(x)^{\alpha p}} \; dx  ,
$$
where $d(x)$ is the distance from $x$ to the boundary. Now, Holder's inequality with
exponents $1/\alpha$ and $(1/\alpha)' = 1/(1-\alpha)$ gives
\begin{equation}\label{tohardysob}
  \int_\Omega
\frac{|u|^{\alpha p}|u|^{(1-\alpha)p^*}}{d(x)^{\alpha p}} \; dx
\leq \left( \int_\Omega
\frac{|u|^{ p}}{d(x)^{p}} \; dx \right)^{\alpha}\left( \int_\Omega
|u|^{p^*}  \; dx \right)^{1-\alpha}.
\end{equation}

Let us recall Hardy and Sobolev inequalities,
$$
\int_\Omega \frac{|u|^{ p}}{d(x)^{p}} \; dx \le C_h \int_\Omega |\nabla u|^p \; dx,
$$
$$
\int_\Omega
|u|^{p^*}  \; dx  \le C_s \left(\int_\Omega |\nabla u|^p \; dx\right)^{p^*/p}
$$
and by using them in equation \eqref{tohardysob}, we get
$$
\left( \int_\Omega
\frac{|u|^{ p}}{d(x)^{p}} \; dx \right)^{\alpha}\left( \int_\Omega
|u|^{p^*}  \; dx \right)^{1-\alpha} \le C_{hs} \left( \int_\Omega |\nabla u|^p \; dx
\right)^{\alpha + (1-\alpha)p^*/p}
$$
where $C_{hs}$ is a constant depending only on $C_h$ and $C_s$, the constants involved
in Hardy and Sobolev inequalities.

Hence, by using the weak formulation for equation $ -\Delta_p u = w(x) |u|^{p-2}u$,
and applying again Holder's inequality with exponents $s$ and $s'$ we obtain
 \begin{align*}  \left( \int_\Omega |\nabla u|^p \; dx \right)^{\frac{\alpha p + (1-\alpha)p^*}{p}}
 & =
 \left( \int_\Omega w(x)|u|^p \; dx \right)^{\frac{\alpha p + (1-\alpha)p^*}{p}}  \\
& \leq   \left( \int_\Omega w(x)^{s} \right)^{\frac{\alpha p + (1-\alpha)p^*}{ps}}
  \left( \int_\Omega |u|^{ps'}
\; dx \right)^{\frac{\alpha p+ (1-\alpha)p^*}{ps'}}.
\end{align*}

We choose now $\alpha$ such that $ps'=q$. Let us observe that
$$
 \frac{\alpha p+ (1-\alpha)p^*}{ps'} = 1,
 $$
  $$
  \frac{\alpha p + (1-\alpha)p^*}{ps}=  \frac{s'}{s},
  $$
and
$$
\alpha = \frac{p^*-ps'}{p^*-p}.
$$

Finally, we get
$$ \frac{1}{r_{\Omega}^{\alpha p}}
 \int_\Omega |u|^q \; dx   \leq
\|w\|_{L^s}^{s'}
   \int_\Omega |u|^q \; dx,
$$
and the Theorem is proved.
\end{proof}

\begin{rem}
A tedious computation shows that
 $$\frac{\alpha p}{s'}  = \frac{p}{s'}\frac{p^*-ps'}{p^*-p} =  \frac{sp-N}{s}.
$$

Since $s>N/p$, the exponent is positive.
\end{rem}

\bigskip

\begin{rem} The constant $C$ depends on the constant $C_h$ appearing on the Hardy
inequality. When $\Omega$ is convex, we have  $C_h = \left(\frac{p}{N-p}\right)^p$;
for other domains, the constant depends on the capacity of  $\mathbb{R}^N \setminus
\Omega$; for Lipschitz domains the constant is close to $1/2$, see \cite{haz, Ker} for
details.
\end{rem}

\begin{rem}\label{remarkb}
In particular, let $\lambda_{1}$be the first eigenvalue of
\[
-\Delta_{p}u=\lambda w(x)|u|^{p-2}u \] in $\Omega$ with zero Dirichlet boundary
conditions in $\partial \Omega$. We have
\begin{equation}\label{cotaautob}
\frac{C}{r_{\Omega}^{\frac{sp-N}{s}} \|w\|_s }\leq
\lambda_{1} ,
\end{equation}
 which gives the lower bound for $\lambda_{1}$ in Corollary \ref{core}.
\end{rem}

\section{Some applications to eigenvalue problems}

\subsection{Optimality of the bounds.}
Let us show the optimality of the power of the inner radius appearing in the
inequality.

\begin{proof}[Proof of Proposition \ref{optimal}.]

For brevity, we will consider only the case $p>N$, $R>1$ since the remaining ones
follow exactly in the same way.

Fix $R>1$, and  let us show that the bound \eqref{cotaauto} from Remark \ref{remarka}
cannot hold for some power $\beta < p-N$ and
$$
 w(r)=\chi_{[0, \ve]}(r) r^{1-N},
$$
where $\chi_{[0, \ve]}(r)$ is the characteristic function of $[0, \ve]$.

Clearly, $\|w\|_1 = \omega_{N-1}\ve$, where $\omega_{N-1}$ is the surface measure of
the unit ball, since
$$
 \int_{B(0,R)} \chi_{[0,\ve]}(|x|)|x|^{1-N}dx=\int_{\omega_{N-1}}\int_0^{\ve}r^{1-N}r^{N-1}dr d\theta.
$$

Let $\lam_1^{(R)}$ and $\lam_1^{(\ve)}$ be the first eigenvalues of the $p$-Laplacian
problem
$$-\Delta_{p}u=\lam w(x)|u|^{p-2}u$$
with Dirichlet boundary conditions
 in $B(0,R)$ and $B(0,\ve)$ respectively.
 We have $\lam_1^{(R)} < \lam_1^{(\ve)}$, since extending the
 functions by zero, we have $W_0^{1,p}(B(0,\ve))\subset W_0^{1,p}(B(0,R)),$ and the
 inequality follows by using the variational characterization,
\begin{align*}
& \lambda_1^{(R)}  = \inf_{\{u\in W_0^{1,p}(B(0,R)) : u\not\equiv 0\}}\frac{\int_{B(0,R)} |\nabla u|^p
dx}{\int_{B(0,R)}\chi_{[0,\ve]}(|x|)|x|^{1-N}dx }
\\
\\  & \lam_1^{(\ve)}   = \inf_{\{u\in W_0^{1,p}(B(0,\ve)): u\not\equiv 0\}}  \frac{\int_{B(0,\ve)}
 |\nabla u|^p dx}{\int_{B(0,\ve)} |x|^{1-N}dx }.
\end{align*}

Since the first eigenfunction in a ball is radial,
\begin{align*} \lambda_1^{(R)}   \le \lam_1^{(\ve)}
& = \inf_{\{u\in  W^{1,p}(0,\ve) : u(\ve)=0, u\not\equiv 0\} \}}
 \frac{\int_0^\ve r^{N-1}|u'|^p dr}{\int_0^{\ve}|u|^p dr} \\
\\
& \le \ve^{N-1} \frac{\pi^p_p}{\ve^p}.
\end{align*}

Then,
$$
 \frac{C}{R^{\beta}}\le \lambda_1 \omega_{N-1}\ve.
$$

Let $\ve=R^{\alpha}$, and if we can choose $\alpha<1$ such that $\beta-
\alpha(p-N)<0$, we reach a contradiction:
$$R^{\alpha(p-N)} \le c R^{\beta}$$

However, this is equivalent to find $\alpha$ satisfying
$$
0 < \frac{\beta}{p-N}< \alpha< 1,
$$
and we can find it if
$$
\frac{\beta}{p-N}<1,
$$
which holds exactly when  $\beta<p-N$.
\end{proof}

\begin{rem}
Clearly, $\beta>\gamma$ is of no interest when the inner radius is greater than 1,
since we get a worse bound instead of an improvement. Similar observations hold for
the remaining cases.
\end{rem}

\subsection{Comparison with other estimates}

Let us consider the following eigenvalue problem:
 \begin{equation}\label{1.2}
\left\{ \begin{array}{ll}
 -\Delta_p u = \lambda w(x)|u|^{p-2}u, &  x \in \Omega\\
  u =0 , &  x \in \partial \Omega\end{array}\right.
\end{equation}

There are few ways to obtain lower bounds for the eigenvalues of the $p-$Laplacian. In
the constant coefficient case, we can use symmetrization and then compare with the
first eigenvalue of a ball with the same measure as $\Omega$, since the Faber-Krahn
inequality implies
$$\lam_1(B) \le \lam_1(\Omega).$$

For weighted problems, a Sturmian-type comparison theorem is available, that is, if
$w_1(x) \le w_2(x)$, then
$$
 \lambda_k(w_2) \le \lambda_k(w_1),
$$
since the eigenvalues are computed with the Rayleigh quotient. Also, Anane and Cuesta
obtained some inequalities that we will review below.

In the rest of the section we compare those bounds with the one obtained from
Corollary \ref{core} when $p>N$ and, $N=2$. Similar results hold for $p<N$, and higher
dimensions.

\bigskip
 {\bf Faber-Krahn.} In order to compare Faber-Krahn inequality and Lyapunov inequality  \eqref{lyapmayor},
 we can expect that the former will be worse in thin domains. So,
 let us take the following family of domains in $\R^2$
 $$
\Omega_R=\{(x,y)\in \R^2 \, : \, 0\le x\le R, \; 0\le y \le 1/R\}
$$
with $0<R\le 1$.

Since $|\Omega_R|=1$, Faber-Krahn gives a fixed lower bound for any $\Omega_R$.
However, Lyapunov inequality (with $w\equiv 1$) implies
 $$
 \frac{C(2,p)^{-p}}{r_{\Omega_R}^{p-2} \|w(x)\|_1 }\leq
  \frac{C(2,p)^{-p}}{(R/2)^{p-2} } = \frac{C}{R^{p-2}} \le \lam_1.
  $$
Now, from equations \eqref{equivnorm}, when $R\to 0$,
 $$ \hat\lambda_1 =
\frac{\pi_p^p}{R^p}+ \pi_p^pR^p  = O\left(\frac{\pi_p^p}{R^p}\right),$$
 and by using  \eqref{otroautopseudo} from Appendix,
 $$
 \lambda_1 =
  O\left(\frac{\pi_p^p}{R^p}\right).
  $$

Lyapunov inequality is better for $R$ small, although it is not optimal in this family
of sets.

Faber-Kahn inequality can be improved as in \cite{Bat, tanos}.  Following Fusco, Maggi
and Pratelli,
$$
 \lam_1(\Omega) \ge \lam_1(B) \left\{ 1 + \frac{A(\Omega)^{2+p}}{C(N,p)}\right\},
$$
where $C(N,p)$ is a fixed constant, and $A(E)$ is the Fraenkel asymmetry of a set $E$
with finite measure,
$$
 A(E) := \inf\left\{ \frac{|E\Delta(x_0+rB(0,1))|}{|E|} : x_0 \in \R^N, r^N|B(0,1)|=|E|\right\}.
$$
Since $A$ is bounded above by 2, the maximum constant that can be involved in the
lower bound is independent of $R$ for the previous family of sets.

\bigskip
 {\bf Sturm type bounds.} Intuitively, this kind of bounds can be improved because by
 adding a highly concentrated spike with very low mass in a given weight we can
 change slightly the eigenvalue, and the supremum norm of the weight can be made
 arbitrarily big. The proof follows easily by using the eigenfunction of the
 unperturbed weight as a test function.

 However, the improvement can be better, even for domains with an inner radius of  the same order than
 the diameter of the domain.
 Suppose that $0\le w\le M$, $\Omega=[0,R]\times[0,R]$, and $R\gg 1$, with $\int_{\Omega} w(x)=1$.
 The variational characterization of the first eigenvalue,
together with \eqref{equivnorm} and  \eqref{otroautopseudo} implies
$$
\frac{ 2\pi_p^p}{M R^p} \le
\lambda_1.
$$
Now, Lyapunov inequality gives the bound
 $$
 \frac{C}{R^{p-2}} \le \lam_1.
 $$
 Let us observe that the difference between them not depend only on $M$, but on a
 factor $M R^2$. Indeed, we always have
$$R^{p-2}\int_{\Omega} wdx \le R^{p}M. $$

 \bigskip
 {\bf Bounds involving norms of the weights.}
For arbitrary weights, there are few estimates involving their norms and the measure
of the domain.

First, Anane obtained  in \cite{Ana} the following estimate:
$$
\frac{C}{|\Omega|^{\sigma}||w||_\infty} \le  \lambda ,
$$
where
\begin{align*} \sigma =p/N & \qquad \mbox{if} \quad 1<p\le N, \\ \sigma=1/2 & \qquad \mbox{if} \quad
N<p.
\end{align*}
Also, Cuesta proved in \cite{Cu} the following inequality:
$$\frac{C}{|\Omega|^{\frac{sp-N}{sN}}||w||_s} \le  \lambda ,
$$
where
\begin{align*} s>N/p & \qquad \mbox{if} \quad 1<p\le N, \\ s=1 & \qquad \mbox{if} \quad
N<p.
\end{align*}
 Clearly, they are Lyapunov type inequalities, involving the measure of the
domain instead of the inner radius. Those inequalities were widely used to show that
the first eigenvalue is isolated, since any other eigenfunction has at least two nodal
domains, and one of them must shrink, but the inequality implies that the first
eigenvalue of the shrinking domain cannot converge to the first eigenvalue of the full
domain.

Let us observe that
$$
 |\Omega|^{1/N} \ge C r_{\Omega}
$$
with equality only when $\Omega$ is a ball, so Corollary \ref{core} gives better
bounds, except in Anane's bound for $p>N$, which is better when $w\simeq cte$,
$|\Omega| \simeq r_{\Omega}^N$, and the measure of $\Omega$ is small enough.

\appendix
\numberwithin{equation}{section}

\setcounter{equation}{0}

\section{Eigenvalues of the $p$-Laplacian}

 We say that a function $u$ is an
eigenfunction of problem
 \begin{equation}\label{eigenpro}
\left\{ \begin{array}{ll}
 -\Delta_p u = \lambda w(x)|u|^{p-2}u, &  x \in \Omega\\
  u =0 , &  x \in \partial \Omega\end{array}\right.
\end{equation}
corresponding to the eigenvalue $\lambda$ if
$$
\int_{\Omega} |\nabla u|^{p-2} \nabla u \nabla \varphi \; dx =
\lambda \int_{\Omega} w(x) |u|^{p-2} u  \varphi \, dx
$$
for any test-function $\varphi\in W^{1,p}_0(\Omega)$. The existence of infinitely many
eigenvalues was proved by Garcia Azorero and Peral Alonso in \cite{GAPA} by using the
critical point theory of Ljusternik--Schnirelmann, and the variational
characterization given by the Rayleigh quotient,
\begin{equation}\label{varsist}
\lam_k=\inf _{C\in \mathcal{C}_k} \sup _{u\in C}
\frac{  \int_{\Omega}|\nabla u|^{p}\, dx }{
 \int_{\Omega} w(x) |u|^p dx},
\end{equation}
where $\mathcal{C}_k$ is the class of compact symmetric ($C=-C$) subsets of
$W^{1,p}_0(\Omega)$ of Krasnoselskii genus greater or equal that $k$, see \cite{Rabi}
for details.

It is well known that the first eigenfunction is positive and simple, see for instance
\cite{Ana}. Indeed, this result holds for more general operators, including the
so-called pseudo $p$-Laplacian operator,
$$
-\hat\Delta_p v := -\sum_{i=1}^{N} \frac{\partial}{\partial x_i} \left(
\left| \frac{\partial v }{\partial x_i}\right|^{p-2} \frac{\partial v}{\partial
x_i}\right),
$$
and the proof is exactly the same, the simplicity follows by a Picone type identity,
and the positivity by considering $|u_1|$ as a test function, where $u_1$ is the first
eigenfunction.

We will use the pseudo $p$-Laplacian in order to control the eigenvalues of the
$p$-Laplacian. The equivalence of norms in $\R^N$, $ |x|_q \le C_{p,q} |x|_p$ enable
us to compare the first eigenvalue of each problem, since both can be defined
$$
\hat\lambda_1 =\inf_{u \in B}\| |\nabla u|_p \|_p^p; \qquad \lambda_1
=\inf_{u\in B}\| |\nabla u|_2 \|_p^p.
$$
where
$$B=\{u\in W_0^{1,p}(\Omega) : \int_{\Omega} w(x) |u|^p \; dx\}
$$

Clearly,
 \begin{equation}\label{equivnorm}\begin{array}{rcll} \hat\lambda_1 & \le
\lambda_1\le &
N^{(p-2)/2}  \hat\lambda_1 & \mbox{ if } 2<p,\\
 N^{(p-2)/2} \hat\lambda_1 & \le
\lambda_1 \le &
 \hat\lambda_1 &  \mbox{ if } p<2.\end{array}
 \end{equation}

\bigskip
The first eigenvalue of the one dimensional problem with  $w \equiv 1$
 \begin{equation}\label{eigenpro2}
\left\{ \begin{array}{l}
-(|u'|^{p-2}u')' = \lam |u|^{p-2}u \qquad in  (0,L) \\
u(0)=u(L)=0\end{array}\right.
\end{equation}
 can be computed explicitly with the
help of the function $\sin_p(x)$, defined implicitly as
$$
x =  \int_0^{\sin_p(x)} \Big(\frac{p-1}{1-t^p}\Big)^{1/p} dt.
$$
 and its first zero $\pi_p$
$$
\pi_p = 2 \int_0^1 \Big(\frac{p-1}{1-t^p}\Big)^{1/p} dt.
$$
We have
$$
\lambda_1 = \frac{\pi_p^p}{L^P}.
$$
Also, for the mixed boundary condition $u'(0)=u(L)=0$, the first eigenvalue is given
by
$$
\lambda_1 = \frac{2^p \pi_p^p}{L^P}.
$$
We refer the interested reader to the work of  Del Pino, Drabek and Manasevich,
\cite{DDM} for more details about the one dimensional case.

\bigskip
Finally, for $w \equiv 1$ the first eigenvalue $\hat\lambda_1$ and the corresponding
eigenfunction $\hat{u_1}$ of the pseudo $p$-Laplacian in a cube $Q=[0,L]^N\subset
\R^N$ can be computed explicitly. Following \cite{FBP}, we have
 \begin{equation}\label{autopseudo}
\hat\lambda_1 =\frac{\pi_p^p N}{L^p} , \qquad \hat u_1(x)=\prod_{j=1}^N\sin_p\left(\frac{\pi_p x_j}{L}\right),
 \end{equation}
which combined with inequalities \eqref{equivnorm} gives upper and lower bounds for
the first eigenvalue of the $p$-Laplacian in $Q$ with $w(x)\equiv 1$.

A similar computation gives, for $\Omega = \prod_{j=1}^N [0,L_i]$,
 \begin{equation}\label{otroautopseudo}
\hat\lambda_1 =\sum_{j=1}^N \frac{\pi_p^p}{L_j^p}  , \qquad \hat u_1(x)=\prod_{j=1}^N\sin_p\left(\frac{\pi_p x_j}{L_j}\right)
 \end{equation}

\section*{Acknowledgements}

This work was partially supported by Universidad de Buenos Aires under grant
20020100100400, by CONICET (Argentina) PIP 5478/1438, and ANPCyT PICT 07 910. The
authors are members of CONICET.

\medskip

\appendix
\numberwithin{equation}{section}

\setcounter{equation}{0}

\bigskip
\address{Pablo Luis de Napoli   \hfill\break
Departamento  de Matem\'atica, \hfill\break IMAS - CONICET \hfill\break FCEyN  UBA
\hfill\break Ciudad
Universitaria
 \hfill\break  Av. Cantilo s/n  (1428)  \hfill\break
Buenos Aires, Argentina. \hfill\break e-mail: {\tt pdenapo@dm.uba.ar}}

\bigskip
\address{Juan Pablo Pinasco \hfill\break
Departamento  de Matem\'atica, \hfill\break IMAS - CONICET \hfill\break FCEyN  UBA
\hfill\break Ciudad Universitaria
 \hfill\break  Av. Cantilo s/n  (1428)   \hfill\break
Buenos Aires, Argentina.
  \hfill\break e-mail: {\tt
jpinasco@dm.uba.ar}}

\end{document}